\newtheorem{theorem}{Theorem}[section]
\newtheorem{proposition}[theorem]{Proposition}
\newtheorem{lemma}[theorem]{Lemma}
\newtheorem{corollary}[theorem]{Corollary}
\newtheorem{conjecture}{Conjecture}[section]
\theoremstyle{break}
\newtheorem{remarks}[theorem]{Remarks}
\newtheorem{remark}[theorem]{Remark}
\newtheorem{definition}[theorem]{Definition}
\newtheorem{question}[theorem]{Question}
\newtheorem{problem}[theorem]{Problem}
\newcommand{\alg}{{\text{alg}}}
\newcommand{\homo}{{\text{hom}}}
\newcommand{\Hom}{\operatorname{Hom}}
\newcommand{\End}{\operatorname{End}}
\newcommand{\Gal}{\operatorname{Gal}}
\newcommand{\Pic}{\operatorname{Pic}}
\newcommand{\Tor}{{\operatorname{Tor}}}
\newcommand{\Spec}{\operatorname{Spec}}
\newcommand{\Br}{\operatorname{Br}}
\newcommand{\colim}{\operatornamewithlimits{colim}}
\newcommand{\rk}{\operatorname{rank}}
\newcommand{\NS}{\operatorname{NS}}
\newcommand{\Z}{{{\mathbb Z}}}
\newcommand{\Q}{{{\mathbb Q}}}
\newcommand{\R}{{{\mathbb R}}}
\newcommand{\T}{{{\mathcal T}}}
\newcommand{\M}{{\mathcal M}}
\newcommand{\et}{{\text{\rm et}}}
\newcommand{\ab}{{\text{\rm ab}}}
\newcommand{\Zar}{{\text{\rm Zar}}}
\newcommand{\Alb}{\operatorname{Alb}}
\newtheorem{note}{Note.}
\newtheorem{exercise}{Exercise.}
\newtheorem{example}{Example.}
\newtheorem{proof}{Proof.}
\newcommand{\qed}{\nopagebreak\par\hspace*{\fill}$\square$\par\vskip2mm}
\newcommand{\myemail}[1]{\indent \emph{E-mail:} {\tt #1}}
\newcommand{\myaddress}[1]{\indent {\sc #1}\par}
\title{Duality of integral \'etale motivic cohomology}
\author{Thomas H.~Geisser}
\date{}
\begin{document}
\maketitle

\begin{abstract}
We discuss duality pairings on integral \'etale motivic cohomology groups
of regular and proper schemes
over algebraically closed fields, local fields, finite fields, and 
arithmetic schemes.
\end{abstract}

\section{Introduction}
Let $X$ be a smooth and projective
variety of dimension $d$ over a perfect field $k$. 
Using duality
theorems for \'etale cohomology with finite coefficients, we show
duality results on integral \'etale motivic cohomology groups.

If $k$ is algebraically closed, and $m$ is an integer prime to the 
characteristic of $k$, we construct for all $n,u$ satisfying $w=n+u-d>0$ 
a Galois invariant pairing 
\begin{equation}\label{cpair1}
H^{2d+1-i}_\et(X,\Z(u))/m \times {}_m H^{i}_\et(X,\Z(n)) 
\to \Z/m(w) 
\end{equation}
which is non-degenerate on the left. For the right kernel
${}_m H^{i}_\et(X,\Z(n))^0$, we obtain a secondary perfect pairing 
\begin{equation}\label{secondary}
{}_m H^{i}_\et(X,\Z(n))^0\times {}_m H^{2d+2-i}_\et(X,\Z(u))^0
\to \Z/m(w), 
\end{equation}
and we show that for $n+u=d+1$, $i=2n$ and $2d+2-i=2u$,
this pairing becomes perfect when restricted to the divisible subgroups. 
Thus the pairing can be thought of as a
generalization of the $e_m$-pairing between the Picard and Albanese abelian 
variety of $X$. 

If $k$ is finite, we construct for all $m$ pairings
\begin{equation}
H^{2d+2-i}_\et(X,\Z(d-n))/m \times {}_m H^{i}_\et(X,\Z(n)) \to \Z/m 
\end{equation}
which are non-degenerate on the left. We conjecture that the pairing is perfect, 
and relate this conjecture to Tate's conjecture on the surjectivity
of the cycle map.

If $k$ is local and $m$ prime to the characteristic of $k$,
we construct pairings 
$$H^{2d+3-i}_\et(X,\Z(d+1-n))/m \times  {}_m H^{i}_\et(X,\Z(n))\to\Z/m$$
which are non-degenerate on the left. In case $d=0$, the perfectness
of the pairing is equivalent to the statement of local class field theory,
and for $d=n=1$ it amounts to Lichtenbaum's duality between the Picard
group and the Brauer group for curves over a $p$-adic field 
\cite{lichtenbaumcurve}. 
In contrast, we show that the pairing can have a right kernel 
for curves, and we give an example showing that the pairing
can have a right kernel even in the good reduction case.
In particular, there is no duality "in some appropriate sense
of the term" expected by Lichtenbaum \cite[\S 6]{lichtenbaumMC}.

\medskip

Notation: 
We denote Bloch's motivic complex by $\Z(n)$, a complex
of \'etale sheaves \cite[Lemma 3.1]{ichdede}. When we need to 
emphasize that $\Z(n)$ is considered as a complex of Zariski sheaves,
we write $\Z(n)^\Zar$. 

For an abelian group $A$,  we denote by ${}_m A$ its $m$-torsion,
by $A\{l\}=\colim_{r} {}_{l^r}A$ its subgroup of
$l$-power torsion elements, by $A^*$ its Pontrjagin dual
$\Hom(A,\Q/\Z)$, by $A^\wedge=\operatorname{lim}_m A/m$ its completion,
by $A^{\wedge l}=\operatorname{lim}_r A/l^r$ its the $l$-adic completion, and by
$T_lA=\operatorname{lim}_r {}_{l^r} A$ its $l$-adic Tate module.

\section{Algebraically closed fields}
Over an algebraically closed base field, Zariski and \'etale hypercohomology
of the motivic complex  agree in weights at least the dimension, 
i.e. if $\epsilon: X_\et\to X_\Zar$ is
the change of topology map, then the adjunction map 
$\Z(n)^\Zar\to R\epsilon_*\Z(n)$ is a quasi-isomorphism of complexes
of sheaves for the Zariski-topology for $n\geq d$.
This was deduced in \cite{ichduality} from a theorem of Suslin away from
the characteristic and from \cite{ichmarc} at the characteristic. 
Since the Zariski hypercohomology admits a push-forward map for the proper map
$f:X\to k$,
we obtain a Gysin map for \'etale motivic cohomology as the composition
$$Rf_*R\epsilon_* \Z(w+d)_X \cong Rf_*\Z(w+d)_X^\Zar\to \Z(w)_k^\Zar
\cong R\epsilon_*\Z(w)_k.$$ 
If $k$ is a perfect field, then applying Galois cohomology $\Gamma(\Gal(k),-)$ 
to this Gysin map over the base extension to the algebraic closure, 
we obtain a "trace" map
$$H^{2d+v}_\et(X,\Z(w+d))\to H^{v}_\et(k,\Z(w))$$
for $X$ proper over $k$ and any $w\geq 0$.

The cup product pairing on higher Chow groups induces a
product on \'etale hypercohomology, hence for $w=n+u-d$ we obtain a pairing 
\begin{equation}\label{basicpair}
 H^{2d+v-i}_\et(X,\Z(u))\times H^{i}_\et(X,\Z(n)) \to 
H^{2d+v}_\et(X,\Z(u+n)) 
\stackrel{tr}{\to} H^v_\et(k,\Z(w)).
\end{equation}
If $k$ is algebraically closed, $m$ invertible in $k$, and $w\geq 1$, then
the coefficient sequence gives an isomorphism
$$\Q/\Z(w)\cong H^0_\et(k,\Q/\Z(w))\cong \Tor H^1_\et(k,\Z(w)).$$
Indeed, this follows by comparing to Suslin's calculation 
of the $K$-theory of an algebraically closed field.
Restricting the pairing \eqref{basicpair} for $v=1$ to the $m$-torsion on the
right, we obtain for 
$$ n+u>d, \quad i+j=2d$$
a pairing
\begin{equation*}
H^{j}_\et(X,\Z(u))/m \times {}_m H^{i+1}_\et(X,\Z(n)) \to  
{}_m H^{2d+1}_\et(X,\Z(u+n)) \to \Z/m(w) .
\end{equation*}
This is compatible with Poincare-duality for \'etale cohomology
with finite coefficients
\begin{equation}\label{ababa}
\begin{CD}
H^{j}_\et(X,\Z(u))/m @.\times @. {}_m H^{i+1}_\et(X,\Z(n)) @>>>
 {}_mH^1_\et(k,\Z(w)) \cong \Z/m(w) \\
@VVV @.@A\partial AA @A\partial A\cong A \\
H^{j}_\et(X,\Z/m(u))@.\times @. H^{i}_\et(X,\Z/m(n)) @>>> 
H^0_\et(k,\Z/m(w)) \cong \Z/m(w)
\end{CD}\end{equation}
because $\partial(x\cup y)=x\cup \partial(y) $ if $\partial(x)=0$.
Consequently, we obtain a map of short exact sequences of finite abelian groups
\begin{equation} \label{cdagr}
\begin{CD}
H^{j}_\et(X,\Z(u))/m@>>>  H^{j}_\et(X,\Z/m(u))@>>> 
{}_m H^{j+1}_\et(X,\Z(u))\\
@VVV @VVV @VVV \\
{}_m H^{i+1}_\et(X,\Z(n))^\sharp@>>> H^{i}_\et(X,\Z/m(n))^\sharp@>>> 
(H^{i}(X,\Z(n))/m)^\sharp,
\end{CD}\end{equation}
where for any abelian group $A$, $A^\sharp=\Hom(A,\Q/\Z(w))$ is the twist the usual Pontrjagin dual.
The middle map is an isomorphism by Grothendieck's
Poincar\'e duality for \'etale cohomology, so that 
the snake Lemma gives an exact sequence
\begin{multline}\label{cseq}
 0\to H^{j}_\et(X,\Z(u))/m \to ({}_m H^{i+1}_\et(X,\Z(n)))^\sharp \\
\stackrel{\delta}{\longrightarrow}
 {}_m H^{j+1}_\et(X,\Z(u))\to (H^{i}_\et(X,\Z(n))/m)^\sharp\to 0.
\end{multline}
In particular, the pairings \eqref{cpair1} are non-degenerate
on the left, and $\delta$ induces the pairing \eqref{secondary}.
It is easy to see that 
the sequence \eqref{cseq} is compatible with varying $m$.

\begin{remark}
The construction of the pairing requires $u+n\geq d+1$.
For example, for $X=\Spec k$ and $u=n=0$, the
diagram \eqref{ababa} does not commute. The construction also does not
work for $m$ a power of the characteristic of the base field, because
then $\Z/m(w)=0$ for $w>0$.
\end{remark}

\subsection*{The case $u+n=d+1, i=2n-1, j=2u-1$}

\begin{example}(Rojtman's theorem)
If $n=d, u=1, i=2d-1, j=1$, then $H^{j}_\et(X,\Z(u))/m= k^\times /m =0$.
Moreover, it was shown in \cite{ichstructure} that 
$H^{2d-1}_\et(X,\Z(d))$ modulo its divisible subgroup is isomorphic to the
dual of $\Tor \NS X$. Hence  
the fact that $(A^\sharp/m)^\sharp={}_m A$ for a finite group $A$ imply that  
\eqref{cseq} gives a short exact sequence
$$ 0\to {}_m H^{2d}_\et(X,\Z(d))^\sharp \to {}_m \Pic X\to {}_m\NS X\to 0.$$
From this we can deduce Rojtman's theorem away from the 
characteristic. Indeed, since
$CH_0(X)\cong H^{2d}_\et(X,\Z(d))$ as explained in the beginning of
this section, and the Albanese map 
$CH_0(X)\to \Alb_X(k)$ is surjective, it suffices to show
that the order of the $m$-torsion of both sides agrees.
But from the duality of the Picard and Albanese variety we know
that $|{}_m\Alb_X(k)|= |{}_m\Pic^0_X(k)|= |{}_m \Pic X|/| {}_m\NS X|$.
\end{example}

\begin{proposition} Let $u+n=d+1$ and 
assume that $H^{2u-1}_\et(X,\Z_l)$ and $H^{2n-1}_\et(X,\Z_l)$
are torsion free. Then we have a perfect pairing
$$ {}_{l^r} H^{2n}_\et(X,\Z(n))\times {}_{l^r} H^{2u}_\et(X,\Z(u))
\to \mu_{l^r}.$$
\end{proposition}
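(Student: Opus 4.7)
The plan is to specialize the exact sequence~\eqref{cseq} to the indices $i = 2n-1$, $j = 2u-1$, $m = \ell^r$. With $w = n+u-d = 1$ we have $\Z/\ell^r(w) = \mu_{\ell^r}$ and $(-)^\sharp = \Hom(-,\mu_{\ell^\infty})$, and the sequence reads
$$0 \to H^{2u-1}_\et(X,\Z(u))/\ell^r \to \bigl({}_{\ell^r}H^{2n}_\et(X,\Z(n))\bigr)^\sharp \stackrel{\delta}{\to} {}_{\ell^r}H^{2u}_\et(X,\Z(u)) \to \bigl(H^{2n-1}_\et(X,\Z(n))/\ell^r\bigr)^\sharp \to 0.$$
By the construction of~\eqref{cseq}, the connecting map $\delta$ is the adjoint of the pairing in the proposition; hence perfectness reduces to the two vanishings
$$H^{2u-1}_\et(X,\Z(u))/\ell^r = 0 = H^{2n-1}_\et(X,\Z(n))/\ell^r.$$

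The approach to each vanishing is to combine the motivic Bockstein
$$0 \to H^{2u-1}_\et(X,\Z(u))/\ell^r \to H^{2u-1}_\et(X,\mu_{\ell^r}^{\otimes u}) \to {}_{\ell^r}H^{2u}_\et(X,\Z(u)) \to 0,$$
arising via the Bloch--Kato identification $\Z/\ell^r(u) \simeq \mu_{\ell^r}^{\otimes u}$, with the analogous $\ell$-adic universal coefficient sequence for $H^{2u-1}_\et(X,\Z_l(u))$. Generalizing the structure result of~\cite{ichstructure} used in the Rojtman example just above (which records the case $u=d$), the quotient of $H^{2u-1}_\et(X,\Z(u))$ by its maximal divisible subgroup is identified with the Pontrjagin dual of $H^{2u-1}_\et(X,\Z_l)_{\tor}$. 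The torsion-free hypothesis on $H^{2u-1}_\et(X,\Z_l)$ then forces this quotient to vanish, so $H^{2u-1}_\et(X,\Z(u))$ is $\ell$-divisible and hence $H^{2u-1}_\et(X,\Z(u))/\ell^r = 0$ for every $r$. The argument for $H^{2n-1}_\et(X,\Z(n))$ is symmetric, using the torsion-freeness of $H^{2n-1}_\et(X,\Z_l)$.

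The main obstacle is justifying the structural identification of the non-divisible quotient of $H^{2u-1}_\et(X,\Z(u))$ with $(H^{2u-1}_\et(X,\Z_l)_{\tor})^\sharp$ in the generality required here; the cited example only treats $u=d$. I expect this to follow by a careful comparison of the two Bockstein sequences above, using Poincar\'e duality for $\ell$-adic \'etale cohomology over an algebraically closed field to translate between torsion in $H^{2u-1}_\et(X,\Z_l)$ and the $\ell$-power torsion of $H^{2u}_\et(X,\Z_l(u))$ that controls the connecting map, and then quoting the resulting identification for integral \'etale motivic cohomology.
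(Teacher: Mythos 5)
Your proposal is correct and follows essentially the same route as the paper: specialize \eqref{cseq} to $i=2n-1$, $j=2u-1$, and reduce perfectness to the vanishing of the two outer terms, which follows once $H^{2u-1}_\et(X,\Z(u))$ and $H^{2n-1}_\et(X,\Z(n))$ are seen to be $l$-divisible under the torsion-freeness hypothesis. The ``main obstacle'' you flag is not an obstacle: the structure result is not special to $u=d$ but is the general theorem of \cite{ichstructure} (that $H^{2u-1}_\et(X,\Z(u))$ is an extension involving an $l$-divisible group and a finite group controlled by the torsion of $H^{2u-1}_\et(X,\Z_l)$), which is exactly what the paper cites.
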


\begin{proof}
This follows because $H^{2u-1}_\et(X,\Z(u))$ is the extension of an $l$-divisible
group by a finite group contained in  $H^{2u-1}_\et(X,\Z_l)$, see \cite{ichstructure}.
It follows that under the hypothesis the outer terms in \eqref{cseq} vanish.
\qed
\end{proof}

Note that the $l$-adic cohomoloy groups in question are torsion free 
for almost all $l$,
and they are torsion free for all $l$ if $X$ is an abelian variety.
To get an unconditional pairing, consider the subgroup of divisible elements
\begin{align*}
H^{2n}_\homo(X,\Z(n))&=
\ker H^{2n}_\et(X,\Z(n))\to \operatorname{lim}_m H^{2n}_\et(X,\Z(n))/m\\
&= \ker H^{2n}_\et(X,\Z(n))\to \prod_l H^{2n}_\et(X,\Z_l(n)).
\end{align*}
By \cite[Cor. 2.2]{ichstructure}, $\Tor H^{2n}_\et(X,\Z(n))$ is a direct summand
of $H^{2n}_\et(X,\Z(n))$. This property is shared with its subgroup of 
divisible elements $H^{2n}_\homo(X,\Z(n))$, and from this
one easily concludes that $H^{2n}_\homo(X,\Z(n))$ is in fact the 
maximal divisible subgroup of $H^{2n}_\et(X,\Z(n))$.

\begin{proposition}
We have a perfect pairing 
$$ {}_{m} H^{2n}_\homo(X,\Z(n))\times {}_{m} H^{2u}_\homo(X,\Z(u))
\to \mu_{m}.$$
\end{proposition}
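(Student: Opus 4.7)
The plan is to exploit the exact sequence \eqref{cseq} in the case $i = 2n-1$, $j = 2u-1$:
$$0 \to H^{2u-1}_\et(X,\Z(u))/m \to ({}_m H^{2n}_\et(X,\Z(n)))^\sharp \xrightarrow{\delta} {}_m H^{2u}_\et(X,\Z(u)) \to (H^{2n-1}_\et(X,\Z(n))/m)^\sharp \to 0,$$
combined with the fact that $H^{2n}_\homo$ and $H^{2u}_\homo$, being divisible, split off as direct summands $H^{2n}_\et = H^{2n}_\homo \oplus B_n$ and $H^{2u}_\et = H^{2u}_\homo \oplus B_u$. These splittings induce $({}_m H^{2n}_\et)^\sharp = (H^{2n}_\homo[m])^\sharp \oplus (B_n[m])^\sharp$ and ${}_m H^{2u}_\et = H^{2u}_\homo[m] \oplus B_u[m]$, and it suffices to show that $\delta$ restricts to an isomorphism $(H^{2n}_\homo[m])^\sharp \xrightarrow{\sim} H^{2u}_\homo[m]$. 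Via primary decomposition one may reduce to $m = l^r$ with $l \ne \chr k$; the structure theorem from \cite{ichstructure} then asserts that the $l$-primary parts of $H^{2u-1}_\et$ and $H^{2n-1}_\et$ are extensions of $l$-divisible groups by finite groups of some bounded exponent $M$.

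My first step is to show that $\delta$ carries the summand $(H^{2n}_\homo[m])^\sharp$, extended by zero to $\tilde\phi \in ({}_m H^{2n}_\et)^\sharp$, into $H^{2u}_\homo[m]$. For any $N$ coprime to $\chr k$, the injectivity of $\mu_{mN}$ as a $\Z/mN$-module lets one extend $\tilde\phi$ to $\phi_N \in ({}_{mN} H^{2n}_\et)^\sharp$. The compatibility of \eqref{cseq} with respect to $m \mid mN$ (noted immediately before the proposition) then forces $\delta_m(\tilde\phi) = N\,\delta_{mN}(\phi_N)$, so $\delta_m(\tilde\phi)$ lies in $N \cdot H^{2u}_\et$ for every admissible $N$, hence belongs to the maximal divisible subgroup $H^{2u}_\homo[m]$.

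My second step is to show that the kernel $H^{2u-1}/m$ of $\delta$ vanishes on the divisible summand, i.e., the pairing $H^{2u-1}/m \times H^{2n}_\homo[m] \to \mu_m$ is zero. Writing $H^{2u-1}_\et = H^{2u-1}_\homo \oplus R$ with $R$ of exponent dividing $M$, each class $x \in H^{2u-1}/m$ has a lift $\tilde x \in H^{2u-1}_\et$ with $M\tilde x = 0$. Given $y \in H^{2n}_\homo[m]$, divisibility provides $y = M y'$ with $y' \in {}_{mM} H^{2n}_\et$, and the naturality of the pairings for $m \mid mM$ yields
$$\langle x, y\rangle_m \;=\; M\,\langle \tilde x, y'\rangle_{mM} \;=\; \langle M\tilde x, y'\rangle_{mM} \;=\; 0.$$

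Combining the two steps, $\delta$ restricts to an injection $(H^{2n}_\homo[m])^\sharp \hookrightarrow H^{2u}_\homo[m]$. The construction is symmetric in $n$ and $u$ (both constrained by $n + u = d+1$), so the same argument with the roles swapped yields an injection $(H^{2u}_\homo[m])^\sharp \hookrightarrow H^{2n}_\homo[m]$. Since both groups are finite with $|A^\sharp| = |A|$, both injections must be isomorphisms, giving the desired perfect pairing. The principal obstacle is making the compatibility identity $\delta_m(\tilde\phi) = N\,\delta_{mN}(\phi_N)$ (and the analogous identity $\langle x,y\rangle_m = M\,\langle\tilde x, y'\rangle_{mM}$) precise, which requires tracking how the cup-product/trace construction and the Bockstein maps implicit in the snake-lemma definition of $\delta$ interact with the transitions $\mu_m \hookrightarrow \mu_{mN}$ and ${}_m H \hookrightarrow {}_{mN} H$.
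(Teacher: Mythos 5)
Your argument takes a genuinely different route from the paper's, and it is mostly viable, but Step 1 as written has a gap. For comparison: the paper does not argue by symmetry and counting. It identifies the image of $H^{2u-1}_\et(X,\Z(u))/m$ in $({}_m H^{2n}_\et(X,\Z(n)))^\sharp$ \emph{exactly}, namely as $({}_m Q)^\sharp$ for $Q=\big(H^{2n}_\et(X,\Z(n))/H^{2n}_\homo(X,\Z(n))\big)\otimes\Z_{(l)}$, by invoking the duality from \cite{ichstructure} between the finite groups $H^{2u-1}_\et(X,\Z_l(u))\{l\}$ and $H^{2n}_\et(X,\Z_l(n))\{l\}$ together with the embedding of $Q$ into $H^{2n}_\et(X,\Z_l(n))$; dualizing that identification gives the perfect pairing at once. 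Your Step 2 recovers one half of this (the image of $H^{2u-1}_\et(X,\Z(u))/m$ annihilates ${}_m H^{2n}_\homo(X,\Z(n))$) by a soft bilinearity argument, and you replace the reverse inclusion by the two-sided injection-plus-counting trick. That trade is legitimate and more elementary, at the price of not pinning down $\ker\delta$ precisely, which the paper's computation does.

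The gap is the final inference of Step 1: from $\delta_m(\tilde\phi)\in N\cdot H^{2u}_\et(X,\Z(u))$ for all admissible $N$ you conclude membership in the maximal divisible subgroup. For a general abelian group $A$ the subgroup $\bigcap_N NA$ can be strictly larger than the maximal divisible subgroup, so this step requires structural information about the \emph{even}-degree group $H^{2u}_\et(X,\Z(u))$, whereas the only structure you invoke from \cite{ichstructure} concerns the odd-degree groups $H^{2u-1}_\et$ and $H^{2n-1}_\et$. What saves the argument is that $\delta_m(\tilde\phi)$ is $l^r$-torsion and the $l$-primary torsion of $H^{2u}_\et(X,\Z(u))/H^{2u}_\homo(X,\Z(u))$ is finite (it embeds into $H^{2u}_\et(X,\Z_l(u))\{l\}$ --- this is exactly the paper's assertion about $Q$, with $u$ in place of $n$); an $l$-power-torsion element of the reduced complement that is divisible by $l^s$ for every $s$ must then vanish. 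So you must import this even-degree input to close Step 1. The remaining ingredients --- the extension $\phi_N$ via injectivity of $\mu_{mN}$ over $\Z/mN$, the identity $\delta_m(\tilde\phi)=N\,\delta_{mN}(\phi_N)$ coming from compatibility of \eqref{cseq} with the transition maps (restriction on the dual groups, multiplication by $N$ on the Bockstein groups), the splitting off of divisible subgroups, and the concluding count $|A^\sharp|=|A|$ --- are all sound.
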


\begin{proof}
We can assume that $m=l^r$ is a prime power. Let 
$$Q=\big(H^{2n}_\et(X,\Z(n))/H^{2n}_\homo(X,\Z(n)\big)\otimes\Z_{(l)}\subseteq 
\operatorname{lim}_r H^{2n}_\et(X,\Z(n))/l^r\subseteq H^{2n}_\et(X,\Z_l(n))$$
be the canonical inclusion. Both inclusions are isomorphisms on
torsion subgroups: The former because the cokernel of the completion map
$A\to \operatorname{lim}_r A/l^r$ is uniquely divisible, and 
the latter because the Tate-module $T_l H^{2n+1}_\et(X,\Z(n))$
is torsion free. 

Since $H^{2n}_\homo(X,\Z(n))$ is divisible, we obtain a short 
exact sequence 
$$0\to ({}_m Q)^\sharp \to ({}_m H^{2n}_\et(X,\Z(n)))^\sharp \to 
({}_m H^{2n}_\homo(X,\Z(n)))^\sharp \to  0 .$$
The group $H^{2u-1}_\et(X,\Z(u))$ is an extension of a finite group by a divisible
group \cite[Thm. 1.1]{ichstructure}, 
and the $l$-primary part of this finite group is 
$H^{2u-1}_\et(X,\Z_l(u))\{l\}$, which is dual to $H^{2n}_\et(X,\Z_l(n))\{l\}$,
\cite[Prop. 1.2 (1)]{ichstructure}, the $l$-primary part of $Q$ by the above.
We conclude that in the sequence \eqref{cseq}, 
the image of $H^{2u-1}_\et(X,\Z(u))/m$
in $({}_m H^{2n}_\et(X,\Z(n)))^*$ is exactly $({}_m Q)^*$.\qed
\end{proof}
\begin{example}
We have $H^{2}_\homo(X,\Z(1))\cong \Pic^0(X)$, and 
$H^{2d}_\homo(X,\Z(d))\cong CH_0(X)^0$,  and 
obtain another proof of Rojtman's theorem.
\end{example}

A class is algebraically equivalent to zero if it lies in the 
image of some map
$$ H^{2n}_\et(T\times X,\Z(n)) \stackrel{t_1^*-t_0^*}{\longrightarrow} 
H^{2n}_\et(X,\Z(n)) $$
for a smooth connected scheme $T$ (which we can assume to be a smooth
curve) and closed points $t_0,t_1\in T$. The subgroup of classes
algebraically equivalent to zero is written $H^{2n}_\alg(X,\Z(n))$.
It is a subgroup of $H^{2n}_\homo(X,\Z(n))$, hence we can restrict the
pairing above.

\begin{definition}
A homomorphism from $H^{2n}_\alg(X,\Z(n))$ to the $k$-rational points of an abelian
variety $A$ is regular, if for every pointed smooth connected 
variety $t_0\in T$ and element $\Gamma\in H^{2n}_\alg(T\times X,\Z(n))$, 
the composition with
$$T(k)\to H^{2n}_\alg(X,\Z(n)), \qquad t\mapsto t^*\Gamma-t_0^*\Gamma$$
is the map induced on closed points by a morphism of varieties $T\to A$.
\end{definition}

In \cite{murrebar}, \cite{murretor}, Murre studied the situation
for Chow groups, and he proved that a universal homomorphism to an abelian
variety exists for dimension $0$, and codimensions $1$ and $2$.

\begin{theorem}\label{repr}
There is a universal object $\rho_n:H^{2n}_\alg(X,\Z(n))\to A_n$ 
for regular homomorphisms from $H^{2n}_\alg(X,\Z(n))$ to abelian varieties.
\end{theorem}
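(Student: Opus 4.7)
The plan is to follow the classical construction of universal regular homomorphisms, adapting Murre's method for Chow groups \cite{murrebar,murretor} to the present \'etale motivic setting, using $\ell$-adic cohomology to obtain a uniform dimension bound that is available in arbitrary codimension. Let $\mathcal R$ denote the class of surjective regular homomorphisms $\phi:H^{2n}_\alg(X,\Z(n))\to A(k)$ with $A$ an abelian variety over $k$, with morphisms the compatible quotients of abelian varieties.

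First I would verify that $\mathcal R$ is closed under finite products. Given $\phi_1,\phi_2\in\mathcal R$ with targets $A_1,A_2$, the product $(\phi_1,\phi_2)$ is again regular. Its image is generated by images of morphisms $T\to A_1\times A_2$ arising from pointed smooth curves with correspondences, which by Poincar\'e reducibility lie in a common abelian subvariety $B\subseteq A_1\times A_2$, and one checks that the image of $(\phi_1,\phi_2)$ equals $B(k)$. The induced surjection $H^{2n}_\alg(X,\Z(n))\to B(k)$ is then regular, so $\mathcal R$ has pairwise meets: $\ker\phi_1\cap\ker\phi_2$ is the kernel of an element of $\mathcal R$.

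The key step is a uniform dimension bound on targets: for every $\phi\in\mathcal R$ with target $A$ and every prime $\ell\neq\chr k$,
\[
2\dim A\ \leq\ \dim_{\Q_\ell}H^{2n-1}_\et(X,\Q_\ell(n)).
\]
To prove this, one uses that $A$ is generated, as an algebraic group, by images of finitely many morphisms $\Alb(C_i)\to A$ coming from pointed smooth curves $(C_i,c_{i,0})$ with correspondences $\Gamma_i\in H^{2n}_\alg(C_i\times X,\Z(n))$. Under the identification $T_\ell\Alb(C_i)\cong H^1_\et(C_i,\Z_\ell(1))$ and the map induced by $\Gamma_i$ via the projection formula for $\ell$-adic \'etale cohomology, one obtains an embedding $T_\ell A\otimes\Q_\ell\hookrightarrow H^{2n-1}_\et(X,\Q_\ell(n))$, yielding the bound.

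With these two ingredients, descending chains of kernels in $\mathcal R$ correspond to increasing chains of dimensions of targets, which are bounded, hence stabilize. Choosing $\rho_n:H^{2n}_\alg(X,\Z(n))\to A_n$ in $\mathcal R$ with minimal kernel $K$, any regular $\phi:H^{2n}_\alg(X,\Z(n))\to A'$ satisfies $K\subseteq\ker\phi$ by applying minimality to the product in $\mathcal R$; thus $\phi$ factors through a group homomorphism $A_n(k)\to A'(k)$, which by a standard rigidity argument (its restriction to each curve mapping into $A_n$ via a correspondence is algebraic by the regularity of $\phi$, and such curves cover $A_n$) is induced by a morphism of abelian varieties $A_n\to A'$. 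The main obstacle will be the dimension bound: one must set up the injection $T_\ell A\otimes\Q_\ell\hookrightarrow H^{2n-1}_\et(X,\Q_\ell(n))$ carefully from the regularity condition, verifying that different generating families of correspondences produce compatible maps and that the resulting $\ell$-adic embedding has finite kernel.
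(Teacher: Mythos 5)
Your proposal is correct and follows essentially the same route as the paper, which simply invokes the Serre--H.~Saito argument together with the boundedness of the dimension of abelian variety quotients receiving surjective regular homomorphisms; your $\ell$-adic bound $2\dim A\leq\dim_{\Q_\ell}H^{2n-1}_\et(X,\Q_\ell(n))$ is exactly the boundedness input the paper cites, and the rest is the standard maximal/minimal-kernel construction. You have merely written out the details that the paper delegates to \cite{saito} and \cite{korita}.
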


\begin{proof}
This follows by the argument of Serre-H.\ Saito \cite{saito} 
because the dimension of surjective maps to abelian varieties is 
bounded, see also \cite{korita}.\qed
\end{proof}

\begin{question}
Is there a duality between the abelian varieties 
$A_n$ and $A_{u}$ of Theorem \ref{repr}
such that the diagram below arising from the $e_m$-pairing is commutative? 
$$\begin{CD}
{}_m H^{2n}_\alg(X,\Z(n))@. \times @.\ {}_m H^{2u}_\alg(X,\Z(u))
 @>>> \mu_m\\ 
@V\rho_n VV @. @V\rho_{u}VV @| \\
{}_m A_n@. \times @. {}_m A_{u} @>>> \mu_m.
\end{CD}$$
\end{question}

\section{Finite fields}
Over a finite field, the pairing \eqref{basicpair} for $v=2$ and $w=0$
becomes for 
$$u+n=d,\quad  i+j=2d+1$$
the pairing
$$ H^{j}_\et(X,\Z(u))\times  H^{i+1}_\et(X,\Z(n))\to H^{2d+2}_\et(X,\Z(d)) 
\stackrel{tr}{\to} H^2_\et(k,\Z) \cong \Q/\Z.$$
This is compatible with Poincar\'e duality 
$$ H^{i}_\et(X,\Z/m(n))\times H^{j}_\et(X,\Z/m(u))\to \Q/\Z$$
for all integers $m$
(the pairing for $m$ a power of $p$ is discussed in \cite{milneppart}
using the isomorphism $\Z/m(n)\cong \nu_r(n)$ from \cite{ichmarc}),  
i.e. the diagram \eqref{ababa} commutes. We obtain as in the previous case 
an exact sequence
\begin{multline}\label{seq1}
 0\to H^{j}_\et(X,\Z(u))/m\to ({}_m H^{i+1}_\et(X,\Z(n)))^*\\ 
\stackrel{\delta}{\longrightarrow} {}_m H^{j+1}_\et(X,\Z(u))
\to (H^{i}_\et(X,\Z(n))/m)^*\to 0,
\end{multline}
hence the resulting pairing
\begin{equation}\label{cpair7}
H^{j}_\et(X,\Z(u))/m \times {}_m H^{i+1}_\et(X,\Z(n)) \to \Z/m 
\end{equation}
is non-degenerate on the left. It is non-degenerate if and only if 
$\delta$ vanishes,
and we show that this is equivalent to Tate's conjecture:



\begin{theorem}\label{delzero}
The map $\delta$ vanishes for $i\not=2n, 2n+1$. It vanishes for $i=2n$
if and only if $H^{2n+1}_\et(X,\Z(n))$ is finite. It vanishes
for $i=2n+1$ if and only if $H^{2u+1}_\et(X,\Z(u))$ is finite.
\end{theorem}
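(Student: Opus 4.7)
The plan is to analyze $\delta$ via Pontryagin duals of the Bockstein exact sequences and reduce the question to the vanishing of Tate modules of integral motivic cohomology, which by $l$-adic Poincar\'e duality and Deligne's purity is controlled by the degree-weight relation. By the compatibility of \eqref{seq1} with varying $m$, we may work $l$-primarily for each prime $l$; finiteness of $H^{2n+1}_\et(X,\Z(n))$ is then equivalent to finiteness of its $l$-primary part at every $l$. For $l\neq \chr k$ one identifies $H^i_\et(X,\Z/l^r(n))\cong H^i_\et(X,\mu_{l^r}^{\otimes n})$ via Bloch-Kato/Voevodsky; the $p$-part is handled through Milne's duality \cite{milneppart} and the identification $\Z/p^r(n)\cong \nu_r(n)$ from \cite{ichmarc}.

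By the snake lemma applied to \eqref{seq1}, $\delta=0$ is equivalent to the injectivity of
$$\gamma_m\colon {}_m H^{j+1}_\et(X,\Z(u)) \longrightarrow (H^i_\et(X,\Z(n))/m)^*,$$
which factors through the Bockstein quotient and the Poincar\'e duality isomorphism with finite coefficients. Passing to the inverse limit over $m=l^r$ and combining the short exact sequence
$$0\to H^a_\et(X,\Z(n))^{\wedge l}\to H^a_\et(X,\Z_l(n))\to T_l H^{a+1}_\et(X,\Z(n))\to 0$$
(and its weight-$u$ analogue) with $l$-adic Poincar\'e duality on $X/\F_q$, the failure of $\gamma$ to be injective is controlled by the Tate modules $T_l H^{i+1}_\et(X,\Z(n))$ and $T_l H^{j+1}_\et(X,\Z(u))$.

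The vanishing of these Tate modules is governed by weights via Deligne's purity: Frobenius has eigenvalue $1$ on $H^a_\et(\bar X,\Q_l(m))$ only when $a=2m$, so the Hochschild-Serre sequence
$$0\to H^{a-1}_\et(\bar X,\Z_l(m))_{G_k}\to H^a_\et(X,\Z_l(m))\to H^a_\et(\bar X,\Z_l(m))^{G_k}\to 0$$
forces $H^a_\et(X,\Z_l(m))$ to be finite for $a\neq 2m,2m+1$, and hence $T_l H^{a+1}_\et(X,\Z(m))=0$ outside those degrees. For $i\neq 2n,2n+1$ one has both $i+1\notin\{2n+1,2n+2\}$ and $j+1\notin\{2u+1,2u+2\}$ (using $j=2d+1-i$ and $u=d-n$), so the relevant Tate modules vanish and $\delta=0$. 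For $i=2n$ the obstruction reduces to $T_l H^{2n+1}_\et(X,\Z(n))$, whose vanishing is equivalent to $H^{2n+1}_\et(X,\Z(n))\{l\}$ being finite; summing over $l$ gives the stated equivalence. For $i=2n+1$ the symmetric analysis, with the roles of $(i,n)$ and $(j,u)$ swapped, identifies the obstruction with $T_l H^{2u+1}_\et(X,\Z(u))$ and yields the equivalence with finiteness of $H^{2u+1}_\et(X,\Z(u))$.

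The main technical obstacle will be the precise identification of $\ker\gamma$ with the appropriate Tate module: this requires a careful diagram chase through the Bockstein exact sequences and compatibility with $l$-adic Poincar\'e duality at each finite level, accounting for the weight-shift between the integral pairing (valued in $\Q/\Z$) and the finite-coefficient Poincar\'e pairing (valued in $\Z/m$), plus a separate treatment of the $p$-primary part using the identification $\Z/p^r(n)\cong\nu_r(n)$.
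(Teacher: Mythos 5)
Your skeleton is the same as the paper's: reduce the vanishing of $\delta_m$ for all $m$ to the vanishing of the limit map $\delta_\infty\colon(\Tor H^{i+1}_\et(X,\Z(n)))^*\to TH^{j+1}_\et(X,\Z(u))$, kill the target by the purity/weight argument when $i\neq 2n,2n+1$, and convert the two remaining cases into finiteness statements. The parts you do carry out are correct: purity gives finiteness of $H^a_\et(X,\Z_l(m))$ for $a\neq 2m,2m+1$ and hence the vanishing of $T_lH^{a+1}_\et(X,\Z(m))$ there, the index bookkeeping $i=2n\Leftrightarrow j=2u+1$ and $i=2n+1\Leftrightarrow j=2u$ checks out, and the implication ``$H^{2n+1}_\et(X,\Z(n))$ finite $\Rightarrow\delta=0$'' is immediate because a finite group cannot map nontrivially into a torsion-free Tate module.

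The gap is the converse direction of both equivalences. You assert that for $i=2n$ ``the obstruction reduces to $T_lH^{2n+1}_\et(X,\Z(n))$'' and defer the identification of $\ker\gamma$ with this Tate module to a later diagram chase, but that identification \emph{is} the theorem: nothing you write rules out that $\delta_\infty$ vanishes even though its source $(\Tor H^{2n+1}_\et(X,\Z(n))\{l\})^*\cong\Z_l^{\rho}\oplus F^*$ is infinite. The missing input is the left-hand portion of \eqref{seq1}: if $\delta_m=0$ for all $m$, then $H^{2u+1}_\et(X,\Z(u))/m\cong({}_mH^{2n+1}_\et(X,\Z(n)))^*$, and since $H^{2u+1}_\et(X,\Z(u))$ is torsion (rationally it is $CH^u(X,-1)_\Q=0$) with finite cotorsion, the left-hand side is bounded independently of $m$, forcing $H^{2n+1}_\et(X,\Z(n))$ to be finite. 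Equivalently, in your normalization: $\ker\delta_\infty$ is the image of the \emph{finite} group $H^{2u+1}_\et(X,\Z(u))^{\wedge l}$, so $\delta_\infty\neq0$ as soon as $\rho>0$. The case $i=2n+1$ then follows by exchanging the roles of $(i,n)$ and $(j,u)$. Finally, ``summing over $l$'' needs a word of justification: finiteness of $H^{2n+1}_\et(X,\Z(n))\{l\}$ for every $l$ yields finiteness of the whole group only because the cotorsion vanishes for almost all $l$ (Gabber), the same point the paper uses in the subsequent proposition.
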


Note that $i=2n\Leftrightarrow j=2u+1$ and $i=2n+1\Leftrightarrow j=2u$.
\begin{proof}
We want to show that the map $\delta_m$ in the following diagram is the zero map:
$$\begin{CD}
 (\Tor  H^{i+1}_\et(X,\Z(n))) ^*
 @>\delta_\infty>>  TH^{j+1}_\et(X,\Z(u))\\
 @VvVV @VVV\\
 ({}_m H^{i+1}_\et(X,\Z(n)))^*@>\delta_m >> 
{}_m H^{j+1}_\et(X,\Z(u)).
\end{CD}$$
Since $v$ is surjective, the vanishing of $\delta_\infty$ is equivalent
to the vanishing of $\delta_m$ for all $m$.
The first statement of the Proposition follows because 
$H^{j}_\et(X,\Q/\Z(u))$ is finite for $j\not=2u,2u+1$ for weight reasons, 
and this group surjects onto $\Tor H^{j+1}_\et(X,\Z(u))$, so that
$TH^{j+1}_\et(X,\Z(u))=0$ for $j\not=2u,2u+1$. 

If $i=2n$, then finiteness of $H^{2n+1}_\et(X,\Z(n))$ implies that the
source of $\delta_\infty$ is finite, hence cannot map
non-trivially to a Tate-module. Conversely, if $\delta_m=0$ for all $m$, then 
the duality between the two cotorsion groups 
$H^{2u+1}_\et(X,\Z(u))/m$ and  ${}_m H^{2n+1}_\et(X,\Z(n))$ 
implies that $H^{2n+1}_\et(X,\Z(n))$ is finite.
Reversing the roles of $i$ and $j$ we obtain the result for $i=2n+1$.\qed
\end{proof}

The connection to Tate's conjecture is given by the following (well-known) Proposition.

\begin{proposition}
The finiteness of $H^{2n+1}_\et(X,\Z(n))$ is equivalent to Tate's conjecture 
on the surjectivity of the cycle map in degree $n$ for $X$.
\end{proposition}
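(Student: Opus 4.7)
The plan is to prove the equivalence one prime at a time via the Bockstein long exact sequence attached to $0\to\Z(n)\stackrel{l^r}{\to}\Z(n)\to\Z/l^r(n)\to 0$.

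First, fix a prime $l\ne\chr k$. Taking the colimit over $r$ of the Bockstein yields
$$0\to H^{2n}_\et(X,\Z(n))\otimes\Q_l/\Z_l\to H^{2n}_\et(X,\Q_l/\Z_l(n))\to H^{2n+1}_\et(X,\Z(n))\{l\}\to 0.$$
Since $H^{2n}_\et(X,\Z_l(n))$ is a finitely generated $\Z_l$-module for smooth proper $X/\F_q$, the middle term has $\Z_l$-corank equal to $\dim_{\Q_l} H^{2n}_\et(X,\Q_l(n))$, while the left-hand term has corank equal to the $\Z_l$-rank of the image of the integral-to-$l$-adic comparison $H^{2n}_\et(X,\Z(n))\to H^{2n}_\et(X,\Z_l(n))$. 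Using $\CH^n(X)\otimes\Q\cong H^{2n}_\et(X,\Z(n))\otimes\Q$ for smooth $X$ (the Zariski and \'etale motivic complexes agree rationally), the right-hand group is thus finite if and only if the cycle map
$$\CH^n(X)\otimes\Q_l\to H^{2n}_\et(X,\Q_l(n))$$
is surjective.

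Second, I would apply the Hochschild-Serre sequence for $X\to\Spec\F_q$ to identify the target with $H^{2n}_\et(\bar X,\Q_l(n))^{G_{\F_q}}$: the correction term $H^1(G_{\F_q}, H^{2n-1}_\et(\bar X,\Q_l(n)))$ vanishes rationally by Deligne's theorem on weights, since Frobenius acts on $H^{2n-1}_\et(\bar X,\Q_l(n))$ with weight $-1$ and hence $F-1$ is invertible. The surjectivity condition above is thereby recognized as Tate's conjecture at the prime $l$.

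Third, to upgrade the prime-by-prime equivalence to the integral statement, note that $H^{2n+1}_\et(X,\Z(n))$ is torsion (since $H^{2n+1}_\et(X,\Q(n))\cong \CH^n(X,-1)\otimes\Q=0$), so its finiteness is equivalent to finiteness of each $l$-primary component together with vanishing at almost all $l$; the latter vanishing is automatic for primes where $H^{2n+1}_\et(X,\Z_l(n))$ is torsion free, which is all but finitely many. For $l=\chr k$, one repeats the argument with $\Z/p^r(n)$ replaced by the logarithmic de Rham-Witt sheaf $\nu_r(n)$ via the identification of \cite{ichmarc}, and uses the Katz-Messing analogue of Deligne's weight theorem in crystalline cohomology. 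The main obstacle is the parallel treatment at $p$, which depends on the logarithmic de Rham-Witt machinery rather than the direct $l$-adic comparison.
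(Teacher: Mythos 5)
Your proof is correct and follows essentially the same route as the paper: both reduce to a prime-by-prime corank comparison in the coefficient sequence linking $H^{2n}_\et(X,-)$ to the $l$-primary torsion of $H^{2n+1}_\et(X,\Z(n))$, identify finiteness with rational surjectivity of the cycle map onto $H^{2n}_\et(\bar X,\Q_l(n))^G$ via Hochschild--Serre and weights, and invoke torsion-freeness of $l$-adic cohomology for almost all $l$ (Gabber) to pass to the integral statement. The only differences are cosmetic: you use the colimit ($\Q_l/\Z_l$-coefficient) form of the sequence where the paper uses the $l$-adic completion $H^{2n}_\et(X,\Z(n))^{\wedge l}\hookrightarrow H^{2n}_\et(X,\Z_l(n))$, and you add a sketch of the $p$-primary part that the paper's proof leaves implicit.
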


\begin{proof}
Consider the coefficient sequence
$$ 0\to H^{2n}_\et(X,\Z(n))^{\wedge l}\to H^{2n}_\et(X,\Z_l(n))
\to T_l H^{2n+1}_\et(X,\Z(n))\to 0.$$
The middle group surjects onto $H^{2n}_\et(\bar X,\Z_l(n))^G$
with finite kernel. On the other hand, in the composition
$$ CH^n(X)\otimes \Z_l \to H^{2n}_\et(X,\Z(n))\otimes \Z_l\to 
H^{2n}_\et(X,\Z(n))^{\wedge l}$$
the left map is an isomorphism up to torsion, and the right map is
surjective (as the target is a subgroup  of $H^{2n}_\et(X,\Z_l(n))$,
hence a finitely generated $\Z_l$-module).
We conclude that the cycle map 
$CH^n(X)\otimes \Z_l \to H^{2n}_\et(\bar X,\Z_l(n))^G$ is 
rationally surjective if and only if $T_l H^{2n+1}_\et(X,\Z(n))=0$
if and only if (the group of cofinite type) $H^{2n+1}_\et(X,\Z(n))\{l\}$
is finite. Finally, by Gabber's theorem \cite{gabber}, the cotorsion of
$H^{2n+1}_\et(X,\Z(n))\{l\}$ vanishes for almost all $l$.\qed
\end{proof}

\begin{example}
If $X$ is a surface, we obtain $H^{6}_\et(X,\Z(1))=0$,
and pairings
\begin{align*}
k^\times &\times   H^{5}_\et(X,\Z(1))\to \Q/\Z\\
{}_m \Pic(X)&\times H^{4}_\et(X,\Z(1))/m\to \Z/m\\
\Pic(X)/m&\times  {}_m H^{4}_\et(X,\Z(1))\to \Z/m\\  
\Br(X)/m&\times  {}_m \Br(X)\to \Z/m.
\end{align*}
The pairings are all perfect, except for the last one, which is perfect
if and only if the Brauer group is finite. In \cite[Thm. 5.1]{tate}, Tate
defines a skew-symmetric pairing on the Brauer group whose kernel consists
exactly of the divisible elements. It is easy to see from the construction
that Tate's pairing is obtained by composing the above pairing with
the canonical map $\Br(X) \to \Br(X)/m$. 
In the limit, this become the composition
$$ \Br(X) \to \Br(X)^\wedge \to \Br(X)^*.$$
The first map has kernel exactly the divisible elements and the second map
is injective.
\end{example}

\begin{remark}
A small modification of \'etale motivic cohomology
yields Weil-\'etale motivic cohomology groups $H^i_W(X,\Z(n))$ 
which are expected  to be finitely generated for all $i,n$ and
smooth and projective $X$ \cite{ichweil}. 
Assuming finite generation, they satisfy dualities
\begin{align*}
 H^i_W(X,\Z(n))/ \Tor &\times H^j_W(X,\Z(u))/\Tor \to \Z\\
\Tor  H^i_W(X,\Z(n))&\times \Tor H^{j+1}_W(X,\Z(u))\to \Q/\Z.
\end{align*}
Under the finite generation conjecture, we have
$ \Tor H^i_\et(X,\Z(n))\cong \Tor H^i_W(X,\Z(n))$ for $i\not=2n+2$,
and $\Tor H^{2n+2}_\et(X,\Z(n))\cong H^{2n+2}_W(X,\Z(n))\oplus (\Q/\Z)^r$,
where $r$ is the rank of $H^{2n}_\et(X,\Z(n))$, as one sees
from the long exact sequence \cite[Thm. 7.1]{ichweil}.
\end{remark}

\subsection*{Arithmetic schemes}
The same discussion as for finite fields should also apply to 
arithmetic schemes, i.e. schemes which are regular, and proper over
the spectrum $B$ of the ring of integers of a number field 
or a smooth and proper curve over a finite field. See \cite{ichdede}
for properties of Bloch's higher Chow groups
on smooth schemes over a Dedekind ring. 
In order to get the correct $2$-torsion
in the presence of real embeddings, one has to consider cohomology with
compact support. It is defined as the \'etale cohomology with compact
support on $B$ of $Rf_!\Z(n)$, see \cite[\S 3]{kato}. 
There is an exact sequence 
\begin{equation}\label{cet}
\cdots \to H^i_c(X,\Z(n)) \to H^i_\et(X,\Z(n)) \to 
\bigoplus_{v\in S_\infty} H^i_T(\R,R\Gamma_\et(X_{\mathbb C},\Z(n)))
\to\cdots, 
\end{equation}
where the last term is Tate-modified cohomology, a finite $2$-group. 
To use the same argument as above, two ingredients are
missing, see also the discussion in \cite[\S 6]{fm}:  

1) The cup-product 
$$ H^{j}_\et(X,\Z(u))\times  H^{i+1}_c(X,\Z(n))\to H^{2d+2}_c(X,\Z(d)) 
\to H^4_c(B,\Z(1))\cong \Q/\Z$$
is conjectured to exist, but this is currently unknown for higher Chow groups
\cite{marcmc}.
The problem is that if two cycles are located in the same special fiber,
they do not intersect in the correct codimension, so that one cycle has to 
be moved to lie horizontal or in another special fiber. 
Spitzweck \cite{spitzweck} has announced a construction of this pairing
in case that $X$ is smooth over $B$.

2) Duality with finite coefficients, i.e., a perfect pairing of finite groups, 
\begin{equation}\label{fcp}
 H^{i}_\et(X,\Z/m(n))\times H^{j}_c(X,\Z/m(u))\to 
H^3_c(B,\Z/m(1))\cong \Z/m
\end{equation}
is not known to exist. If the fibers at all places dividing $m$
are normal crossing schemes, then Sato proved a duality
as above for $\Z/m(n)$ replaced by his $p$-adic Tate-twists $\T_m(n)$
\cite[Thm. 1.2.1]{sato}. It is expected that $\T_m(n)$ and 
$\Z/m(n)$ are quasi-isomorphic, but this is only know if $\Z/m(n)$ is
acyclic in degrees larger than $n$ (because $\T_m(n)$ has this property 
by construction) \cite{zhong}. This would follow, for
example, from a Gersten resolution for $\Z/m(n)$, but this
is only known for smooth schemes \cite{ichdede}. In particular,
we obtain such a pairing localized aways from all $p$ where $X$ has
bad reduction at a place above $p$.


\begin{conjecture}(Lichtenbaum) \label{licht}
The groups $H^i_\et(X,\Z(n))$ are finitely generated for $i\leq 2n$,
finite for $i=2n+1$, and of cofinite type for $i\geq 2n+2$.
\end{conjecture}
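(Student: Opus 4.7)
Since this is a longstanding open conjecture, the best I can offer is a conditional strategy that reduces it to well-established conjectures in the area, in the same spirit as Theorem \ref{delzero} over a finite field. The plan is to split the problem via the \'etale coefficient triangle
$$ \Z(n) \to \Q(n) \to \Q/\Z(n) \to \Z(n)[1] $$
and treat the rational and $\Q/\Z$ pieces separately.

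For the rational part, $\Q(n)$ is a complex of $\Q$-vector spaces, so its \'etale hypercohomology agrees with Bloch's higher Chow groups tensored with $\Q$ by \cite{ichdede}. Under the Bass--Parshin conjecture each such group is finite-dimensional over $\Q$; combined with vanishing of negative $K$-theory of regular schemes and Beilinson--Soul\'e type bounds on motivic weights, this yields $H^i_\et(X,\Q(n))=0$ for $i>2n$. Thus $H^i_\et(X,\Z(n))$ is forced to be of cofinite type (up to a finite error) for $i\geq 2n+2$, and to be finitely generated modulo torsion for $i\leq 2n$.

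For the $\Q/\Z$ part, I would invoke the conjectural duality \eqref{fcp} and the cup-product flagged in the discussion preceding the statement. A perfect pairing $H^i_\et(X,\Z/m(n))\times H^j_c(X,\Z/m(u))\to \Z/m$ with $u+n=d$, $i+j=2d+1$ forces both sides to be finite, since the target is. Running the snake-lemma argument of Theorem \ref{delzero} with compact supports inserted appropriately would then produce a four-term exact sequence analogous to \eqref{seq1} linking $H^j_c(X,\Z(u))/m$ to $(H^i_\et(X,\Z(n))/m)^*$. Passing to the limit in $m$ at each prime $\ell$ reduces finiteness of $H^{2n+1}_\et(X,\Z(n))\{\ell\}$, and cofiniteness in the range $i\geq 2n+2$, to the vanishing of the relevant Tate modules, i.e.\ to the $\ell$-adic Tate conjecture on the special fibres. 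Integrally, the finite pieces across all primes must then be assembled, for which one needs Gabber-type uniform bounds at bad primes.

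The hard part will be supplying the three conjectural inputs: (i) the duality \eqref{fcp}, known only away from bad primes or under Sato's normal-crossings hypothesis; (ii) the cup-product pairing from item (1) preceding the statement, announced by Spitzweck in the smooth case but open in general; and (iii) the Bass--Parshin and Tate finiteness conjectures driving the rational/torsion split. Each is a major open problem, and any substantive new progress on Conjecture \ref{licht} is likely to come from progress on one of them. In the cases where all three inputs are available --- e.g.\ $X=\Spec \mathcal{O}_K$, where the statement reduces to Dirichlet's unit theorem and finiteness of the class group, or arithmetic surfaces with good reduction and verified Tate conjecture --- the conjecture can be assembled from the above ingredients unconditionally.
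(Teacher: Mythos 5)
This statement is an open conjecture (attributed to Lichtenbaum) which the paper does not prove; it is stated without proof and then used purely as a hypothesis in the subsequent Proposition on arithmetic duality. So there is no proof in the paper to compare your attempt against, and no proof attempt could succeed here. Your proposal correctly recognizes this and offers only a conditional reduction, which is the honest thing to do; but it should not be mistaken for a proof, and as a reduction it has some inaccuracies worth flagging.

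First, the vanishing $H^i_\et(X,\Q(n))=0$ for $i>2n$ does not require Bass--Parshin or Beilinson--Soul\'e: rationally, \'etale and Zariski motivic cohomology agree, and $H^i_\M(X,\Z(n))=CH^n(X,2n-i)$ vanishes for $i>2n$ by definition of the higher Chow complex. What Bass--Parshin (or rather the expected finite generation of motivic cohomology of regular arithmetic schemes, essentially Quillen's finite generation conjecture for $K$-theory) is needed for is finite generation of the rational part in degrees $i\leq 2n$, which is the genuinely hard half of the conjecture and which your sketch does not really address beyond naming it. Second, the duality \eqref{fcp} together with the cup-product of item (1) would give pairings non-degenerate on the left, analogous to \eqref{seq1}; but as the paper's own Theorem \ref{delzero} and the Proposition following it show in the finite-field case, finiteness of $H^{2n+1}_\et(X,\Z(n))$ is \emph{equivalent to} Tate's conjecture rather than a consequence of the duality formalism -- the snake-lemma argument cannot produce finiteness, it only translates it. So your step ``passing to the limit \ldots reduces finiteness \ldots to the vanishing of the relevant Tate modules'' is the right heuristic, but the assembly over all primes $\ell$ (including the primes of bad reduction, where even the finite-coefficient duality is unknown) remains entirely open, and the claim that the conjecture ``can be assembled unconditionally'' for arithmetic surfaces with verified Tate conjecture overstates what the listed ingredients deliver, since finite generation of $H^i$ for $i\leq 2n$ (e.g.\ of $CH^2$ of an arithmetic threefold) does not follow from them.
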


If follows from the long-exact sequence \eqref{cet} that then the
same statement holds for cohomology with compact support.
On the other hand, the statement of the conjecture is wrong 
if one removes points from the base $B$.

\begin{proposition}
Assume Conjecture \ref{licht} and the existence of the pairings
with finite coefficients \eqref{fcp}. Then we have perfect pairings
$$ H^j_\et(X,\Z(u))^\wedge  \times \Tor H^{i+1}_c(X,\Z(n))\to \Q/\Z,$$
$$ H^j_c(X,\Z(u))^\wedge  \times \Tor H^{i+1}_\et(X,\Z(n))\to \Q/\Z.$$
\end{proposition}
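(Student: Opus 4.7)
\begin{sketch}
The plan is to adapt the finite-field argument of Theorem~\ref{delzero} to the arithmetic setting. Combining the assumed cup-product pairing $H^{j}_\et(X,\Z(u))\times H^{i+1}_c(X,\Z(n))\to \Q/\Z$ with the assumed perfect finite-coefficient duality \eqref{fcp}, I would form the analogue of diagram \eqref{cdagr} in which the bottom row is the Pontryagin dual of the compact-support coefficient sequence. The middle vertical map is an isomorphism by \eqref{fcp}, so the snake lemma gives, for each $m$, a four-term exact sequence
\begin{equation*}
0\to H^{j}_\et(X,\Z(u))/m \to ({}_m H^{i+1}_c(X,\Z(n)))^* \stackrel{\delta_m}{\longrightarrow} {}_m H^{j+1}_\et(X,\Z(u)) \to (H^{i}_c(X,\Z(n))/m)^*\to 0,
\end{equation*}
and a parallel sequence obtained by interchanging the roles of \'etale and compact-support cohomology.

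Since each ${}_m H^{i+1}_c(X,\Z(n))$ is finite, the group $(\Tor H^{i+1}_c(X,\Z(n)))^{*}=\operatorname{lim}_m ({}_m H^{i+1}_c(X,\Z(n)))^{*}$ is profinite, so the adjoint of the integral pairing factors through the completion, giving
\[ \bar\phi:H^{j}_\et(X,\Z(u))^\wedge\to (\Tor H^{i+1}_c(X,\Z(n)))^{*}. \]
Injectivity of $\bar\phi$ follows at once from the injectivity of each component $\phi_m:H^{j}_\et(X,\Z(u))/m\hookrightarrow ({}_m H^{i+1}_c(X,\Z(n)))^{*}$ in the sequence above. For surjectivity, since the projection $(\Tor H^{i+1}_c(X,\Z(n)))^{*}\twoheadrightarrow ({}_m H^{i+1}_c(X,\Z(n)))^{*}$ is surjective for every $m$, the map $\bar\phi$ is surjective if and only if every $\phi_m$ is surjective, which by exactness happens if and only if $\delta_m=0$ for every $m$. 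Exactly as in the proof of Theorem~\ref{delzero}, the vanishing of all the $\delta_m$ is equivalent to the vanishing of the single limiting map
\[ \delta_\infty:(\Tor H^{i+1}_c(X,\Z(n)))^{*}\to T H^{j+1}_\et(X,\Z(u)). \]

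It remains to show $\delta_\infty=0$ under Conjecture~\ref{licht}, and I would split into two cases. If $j+1\leq 2u+1$, then $H^{j+1}_\et(X,\Z(u))$ is finitely generated or finite, so its Tate module vanishes. Otherwise $j+1\geq 2u+2$, and the numerical relations $i+j=2d+1$ and $u+n=d$ force $i+1\leq 2n+1$; then $H^{i+1}_c(X,\Z(n))$ is finitely generated or finite, so $\Tor H^{i+1}_c(X,\Z(n))$ is finite and $(\Tor H^{i+1}_c(X,\Z(n)))^{*}$ is a finite group. Any map from a finite group to the torsion-free profinite group $T H^{j+1}_\et(X,\Z(u))$ vanishes. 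In either case $\delta_\infty=0$, so $\bar\phi$ is an isomorphism, giving the first perfect pairing. The second pairing follows by the symmetric argument, working from the companion exact sequence.

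The substantive obstacle lies entirely in the two assumed inputs --- the integral cup-product pairing and the finite-coefficient duality \eqref{fcp} --- both explicitly flagged as currently unknown in the discussion preceding the proposition. Beyond those, the argument is a formal extension of the finite-field case of Theorem~\ref{delzero}, the slightly subtle technical point being the identification of surjectivity of $\bar\phi$ with the vanishing of $\delta_\infty$ via Mittag--Leffler for the surjective inverse system $\{({}_m H^{i+1}_c(X,\Z(n)))^{*}\}$.
\end{sketch}
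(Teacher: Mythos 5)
Your argument is correct and is essentially the paper's proof in different packaging: the paper handles the same two cases ($j\le 2u$, where $T H^{j+1}_\et(X,\Z(u))=0$ under Conjecture \ref{licht}, and $j>2u$, where one side is finite and the other torsion-free) by writing a direct chain of isomorphisms $H^j_\et(X,\Z(u))^\wedge\cong\operatorname{lim}_m H^j_\et(X,\Z/m(u))\cong H^i_c(X,\Q/\Z(n))^*\cong(\Tor H^{i+1}_c(X,\Z(n)))^*$, whereas you reach the same conclusion through the snake-lemma/$\delta_\infty$ formalism of Theorem \ref{delzero}. The only substantive difference is that the paper's chain uses nothing beyond \eqref{fcp} and Conjecture \ref{licht}, while your construction of the analogue of \eqref{cdagr} additionally invokes the conjectural integral cup product and its compatibility with \eqref{fcp} --- a dependence you correctly flag.
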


\begin{proof}(see also \cite[Prop. 3.4]{fm})
We show the first statement, the proof of the other statement is identical.
If $j\leq 2u$, then 
$T H^{j+1}_\et(X,\Z(u))=0$ (as the Tate module of  a finitely generated
group vanishes) and $H^i_c(X,\Z(u))\otimes \Q/\Z=0$ (as the 
cohomology group is torsion) and we obtain
$$ H^j_\et(X,\Z(u))^\wedge \cong \operatorname{lim} H^j_\et(X,\Z/m(u))
\cong H^i_c(X,\Q/\Z(n))^* \cong (\Tor H^{i+1}_c(X,\Z(n)))^*.$$
If $j>2u$, then $H^j_\et(X,\Z(u))^\wedge$ is finite, 
$(H^i_c(X,\Z(n))\otimes\Q/\Z)^*$ is torsion free, and 
$$ H^j_\et(X,\Z(u))^\wedge \cong \Tor \operatorname{lim} H^j_\et(X,\Z/m(u))
\cong \Tor (H^i_c(X,\Q/\Z(n))^*) \cong (\Tor H^{i+1}_c(X,\Z(n)))^*.$$\qed
\end{proof}

\section{Local fields}
Let $k$ be a local field of characteristic $p\geq 0$, 
i.e. a complete discrete valuation field with finite residue field. 
If we take $w=1$ and $j=3$ in \eqref{basicpair}, then 
setting 
$$n+u=d+1,\quad  i+j=2d+2,$$
we obtain a pairing 
$$ H^{j}_\et(X,\Z(u))\times H^{i+1}_\et(X,\Z(n)) \to 
H^{2d+3}_\et(X,\Z(d+1)) \stackrel{tr}{\longrightarrow} 
H^3_\et(k,\Z(1))\cong\Br k\cong \Q/\Z  .$$
Combining this with the duality over local fields for $p\not|m$ (see
\cite[I Cor. 2.3]{adt} combined with Poincar\'e duality over algebraically 
closed fields), 
$$ H^{j}_\et(X,\Z/m(u))\times H^{i}_\et(X,\Z/m(n)) \to  \Q/\Z ,$$
we again obtain an exact sequence
\begin{multline}\label{locfseq}
 0\to H^{j}_\et(X,\Z(u))/m \to ({}_m H^{i+1}_\et(X,\Z(n)))^* \\
\stackrel{\delta}{\longrightarrow}
 {}_m H^{j+1}_\et(X,\Z(u))\to (H^{i}_\et(X,\Z(n))/m)^*\to 0,
\end{multline}
and pairings which are non-degenerate on the left
$$H^{j}_\et(X,\Z(u))/m \times  {}_m H^{i+1}_\et(X,\Z(n))\to\Q/\Z.$$
If $\delta$ is the zero-map, then this induces in the limit a duality 
$$H^{j}_\et(X,\Z(u))^\wedge \times  \Tor H^{i+1}_\et(X,\Z(n))
\to \Q/\Z,$$
i.e. the duality "in some appropriate sense
of the term" expected by Lichtenbaum \cite[\S 6]{lichtenbaumMC}.

\begin{example}
The vanishing of $\delta$ for $X$ the spectrum of a local 
field is equivalent  to class field theory. 
Indeed, for $u=1, j=1$ it states that the injection
$$ H^1_\et(k,\Z(1))^\wedge \cong (k^\times )^\wedge
\to H^2_\et(k,\Z)^* \cong  H^1_\et(k,\Q/\Z)^* \cong
\Gal(k)^\ab$$ 
is an isomorphism. For $u=0, j=0$ it states that the injection
$$ H^0_\et(k,\Z)^\wedge\cong \hat \Z \to \Br(k)^* \cong 
H^3_\et(k,\Z(1))^*$$
 is an isomorphism.
\end{example}

\begin{example}
If $X$ is a curve over a $p$-adic field
and $n=1$, then the statement for $i=1,2$ is the duality between
$\Pic(X)$ and $\Br(X)$ proven by Lichtenbaum \cite{lichtenbaumcurve}. For
$i=0,3$ it follows from 
$$H^5_\et(X,\Z(1))\cong H^2(k, H^2_\et(\bar X,\Q/\Z(1))\cong H^2(k,\Q/\Z)=0.$$
\end{example}

\begin{proposition}
Assume that either $i\not\in \{n,\ldots, n+d+1\}$, or that 
$X$ has good reduction and $i\not=2n-1, 2n, 2n+1$.
Then $\delta=0$.
\end{proposition}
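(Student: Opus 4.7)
The plan is to imitate the argument of Theorem \ref{delzero}. I would first define the inverse-limit map
$$
\delta_\infty \colon (\Tor H^{i+1}_\et(X,\Z(n)))^* \to T H^{j+1}_\et(X,\Z(u))
$$
obtained by assembling the $\delta_m$'s over $m = l^r$ and over all primes $l \ne p$. As in the proof of Theorem \ref{delzero}, the surjectivity of $(\Tor A)^* \twoheadrightarrow ({}_m A)^*$ together with the commutativity of the evident square reduces the vanishing of $\delta_m$ for all $m$ prime to $p$ to the vanishing of $\delta_\infty$. Fix a prime $l\ne p$. The target $T_l H^{j+1}_\et(X,\Z(u))$ is torsion-free over $\Z_l$, while the source, by Pontryagin duality applied to the cofinite-type $l$-group $\Tor H^{i+1}_\et(X,\Z(n))\{l\}$, splits as a free $\Z_l$-module of rank $\rk_{\Z_l} T_l H^{i+1}_\et(X,\Z(n))$ plus a finite torsion complement that necessarily maps to zero. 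Hence $\delta_\infty$ vanishes on $l$-parts whenever one of these two Tate modules is trivial.

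From the short exact sequence
$$
0 \to H^a_\et(X,\Z(n))^{\wedge l} \to H^a_\et(X,\Z_l(n)) \to T_l H^{a+1}_\et(X,\Z(n)) \to 0,
$$
the Tate module on the right is a torsion-free quotient of the finitely generated $\Z_l$-module $H^a_\et(X,\Z_l(n))$, and hence vanishes once the latter is finite, equivalently once $H^a_\et(X,\Q_l(n)) = 0$. It therefore suffices to show, under the hypotheses of the proposition, that at least one of $H^i_\et(X,\Q_l(n))$ or $H^j_\et(X,\Q_l(u))$ vanishes for every $l \ne p$.

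For the good-reduction case this is a Frobenius-weight argument. Inertia acts trivially on $H^q(\bar X,\Q_l(n))$, and by Deligne the eigenvalues of geometric Frobenius are Weil numbers of weight $q-2n$. The Hochschild--Serre spectral sequence for $1 \to I \to G_k \to \hat\Z \to 1$ involves only the terms $H^p(\hat\Z, H^q(I, M))$ with $p, q \in \{0, 1\}$, and using $H^1(I, M) \cong M(-1)$ together with the fact that both invariants and coinvariants of $\hat\Z$ on a pure representation extract only the weight-zero part, one finds that $H^a_\et(X,\Q_l(n))$ can be nonzero only for $a \in \{2n-1, 2n, 2n+1\}$. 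Symmetrically $H^b_\et(X,\Q_l(u)) \ne 0$ forces $b \in \{2u-1, 2u, 2u+1\}$, which via $i+j = 2d+2$ and $n+u = d+1$ translates to exactly $i \in \{2n-1, 2n, 2n+1\}$. Excluding this three-element set therefore makes both source and target trivial.

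For the case $i \notin \{n, \ldots, n+d+1\}$ no weight filtration is available and the bound must come from dimensional considerations. I would combine $\cd_l(G_k) \le 2$ with the geometric vanishing $H^q(\bar X,\Q_l(n)) = 0$ for $q \notin \{0, \ldots, 2d\}$ and with the fact that $\Z(n)^{\Zar}$ is concentrated in Zariski cohomological degrees $\le n$ (so that $H^a_\Zar(X,\Z(n)) = 0$ for $a > n+d$), using Beilinson--Lichtenbaum to transport the Zariski statement to an integral \'etale one. The main obstacle will be to extract the sharp interval $\{n, \ldots, n+d+1\}$ rather than the coarser $\{0, \ldots, 2d+2\}$ coming from naive cohomological-dimension estimates; this presumably requires a careful interplay between integral \'etale motivic cohomology, its Zariski counterpart, and local Tate duality on the base.
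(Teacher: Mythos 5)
Your reduction is the same as the paper's: pass to the limit map $\delta_\infty\colon (\Tor H^{i+1}_\et(X,\Z(n)))^*\to TH^{j+1}_\et(X,\Z(u))$, note that surjectivity of $(\Tor A)^*\to({}_mA)^*$ makes the vanishing of all $\delta_m$ equivalent to that of $\delta_\infty$, and then kill $\delta_\infty$ by showing that either the source is finite or the (torsion-free) target Tate module vanishes --- both being controlled by the vanishing of $H^{i}_\et(X,\Q_l(n))$ resp.\ $H^{j}_\et(X,\Q_l(u))$, equivalently the finiteness of the corresponding $\Q_l/\Z_l$-cohomology. Your good-reduction case is correct and essentially self-contained: the purity/Hochschild--Serre weight count showing $H^a_\et(X,\Q_l(n))\neq 0$ only for $a\in\{2n-1,2n,2n+1\}$ is exactly the content of the finiteness statement the paper quotes from Kahn in the good-reduction case, and your remark that the constraint on $j$ transcribes (via $i+j=2d+2$, $n+u=d+1$) to the same three values of $i$ recovers in one stroke the paper's two subcases ($i<2n-1$: finite source; $i>2n+1$: zero target).

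The gap is the first case. For $i\notin\{n,\dots,n+d+1\}$ you only sketch a plan and yourself concede that naive bounds ($\cd_l(k)\le 2$ together with $H^q(\bar X,\Q_l(n))=0$ for $q\notin[0,2d]$) yield the interval $\{0,\dots,2d+2\}$ rather than the asserted one. Moreover the Zariski-degree bound on $\Z(n)^\Zar$ does not transport to the relevant groups: what controls $\Tor H^{i+1}_\et(X,\Z(n))$ and $TH^{j+1}_\et(X,\Z(u))$ is continuous $l$-adic cohomology $H^\bullet_\et(X,\Q_l(n))$, not rational motivic cohomology, and there is no low-degree vanishing there (e.g.\ $H^0(\bar X,\Q_l(n))=\Q_l(n)\neq 0$ for all $n$). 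The missing ingredient is precisely Kahn's finiteness theorem \cite{kahnfre}: $H^i_\et(X,\Q/\Z[\tfrac1p](n))$ is finite for $i\notin\{n,\dots,n+d+1\}$ for smooth proper $X$ over a local field, with no hypothesis on the reduction. The paper's proof consists of citing this and then running exactly your finite-source/zero-target dichotomy ($i<n$ makes the source finite; $i>n+d+1$, i.e.\ $j<u$, kills the target). Without that input, or an actual proof of it, your argument establishes only the good-reduction half of the proposition.
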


\begin{proof}
Again the vanishing of $\delta_m$ for all $m$ is
equivalent to the vanishing of $\delta_\infty $ in the diagram
$$\begin{CD}
(\Tor H^{i+1}_\et(X,\Z(n)))^* @>\delta_\infty >> T H^{j+1}_\et(X,\Z(u))\\
@VVV @VVV \\
({}_m H^{i+1}_\et(X,\Z(n)))^* @>\delta_m >> {}_m H^{j+1}_\et(X,\Z(u))
\end{CD}$$ 
because the left vertical map is surjective.
By \cite{kahnfre},  
$H^i_\et(X,\Q/\Z[\frac{1}{p}](n))$ is finite for 
$i\not\in \{n,\ldots , n+d+1\}$ for
general $X$, and $i\not=2n-1,2n,2n+1$ for $X$ with good reduction. 
This implies that 
$\Tor H^{i+1}_\et(X,\Z(n))$ is finite for $i<n$ and $i< 2n-1$,
respectively, hence its dual cannot map non-trivially to the torsion free
Tate-module. On the other hand,  
the Tate module $T H^{j+1}_\et(X,\Z(u))$ vanishes for 
$j<u\Leftrightarrow i> n+d+1$ and 
$j< 2u-1\Leftrightarrow i> 2n+1$, respectively.\qed 
\end{proof}

We believe that an improvement is possible:

\begin{conjecture}
If $i\not\in \{ n+1,\ldots, n+d\} $ or if 
$X$ has good reduction and $i\not=2n$, then $\delta=0$.
\end{conjecture}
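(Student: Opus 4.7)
We follow the reduction of the previous Proposition: the vanishing of $\delta$ for all $m$ is equivalent to
$$\delta_\infty\colon(\Tor H^{i+1}_\et(X,\Z(n)))^* \to T H^{j+1}_\et(X,\Z(u))$$
being zero, and since the target is torsion-free it is enough that either the source is finite or the target vanishes. The cases still to handle are $i=n$ and $i=n+d+1$ in general, and $i=2n\pm 1$ for $X$ with good reduction. These are precisely the boundary values where the Kahn finiteness of $H^i_\et(X,\Q/\Z[\frac{1}{p}](n))$ used in the Proposition fails; the relevant groups become of cofinite type, and the argument needs to detect the structure of their divisible part.

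First, I would refine the computation of $H^{i+1}_\et(X,\Q/\Z[\frac{1}{p}](n))$ via the Hochschild--Serre spectral sequence
$$E_2^{p,q} = H^p(k, H^q(\bar X, \Q_l/\Z_l(n))) \Rightarrow H^{p+q}(X, \Q_l/\Z_l(n)),$$
using purity of the geometric cohomology (of weight $q-2n$ in the good-reduction case, and bounded by $q-2n$ in general). In each remaining boundary case, at most one of the three columns $p=0,1,2$ fails to be finite, and the offending column lies at a Galois module of weight $-1$, $0$, or $+1$.

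Second, for the good-reduction boundary cases where the offending weight is $0$, I would use smooth proper base change to specialize the cofinite-type contributions to the special fibre $X_0$ over the finite residue field $k_0$, where the cup product pairing reduces to the one in Section 3. The key point is to verify that the Bockstein construction of $\delta$ on the boundary row is compatible with the specialization and then to invoke Theorem \ref{delzero} on $X_0$ in the appropriate degrees. For the general boundary cases $i=n$ and $i=n+d+1$, where the offending weight is $\pm 1$, one argues directly through local Tate duality $H^p(k,V)\cong H^{2-p}(k,V^*(1))^*$ that the image of the divisible contribution lies in a finite subgroup of the Tate module and is therefore zero.

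The $p$-primary part in mixed characteristic requires substituting Sato's $p$-adic Tate twists $\T_m(n)$ for $\Z/m(n)$ and assuming semistable or good reduction so that his Poincar\'e duality applies. The main obstacle is the second step: keeping track of the specialization compatibility for the weight-$0$ contributions and matching them to the correct degrees of Theorem \ref{delzero} on the special fibre. The exclusion $i=2n$ under good reduction is forced by this approach, because the weight-$0$ contribution in that degree specializes to exactly the degree $i=2n$ of Theorem \ref{delzero}, which is the one that requires Tate's conjecture and cannot be handled unconditionally.
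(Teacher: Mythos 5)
The statement you are trying to prove is stated in the paper as a \emph{conjecture}: the author writes ``We believe that an improvement is possible'' and offers no proof. The paper's proved result is the weaker Proposition immediately preceding it, which excludes the boundary degrees $i=n$, $i=n+d+1$ (general case) and $i=2n\pm1$ (good reduction case); those boundary degrees are exactly the content of the conjecture. So there is no proof in the paper to compare yours against, and your text, read on its own terms, is a research plan rather than a proof: you correctly identify the reduction to $\delta_\infty$ and correctly locate where Kahn's finiteness theorem stops helping, but every step that goes beyond the already-proved Proposition is asserted rather than carried out.

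The concrete gaps are these. (1) In the boundary cases the dichotomy ``source finite or target zero'' fails on both sides simultaneously: e.g.\ for $i=n$ one has $j=2d+2-i=u+d+1$, so $H^{i+1}_\et(X,\Q/\Z(n))$ and $H^{j}_\et(X,\Q/\Z(u))$ are both merely of cofinite type, and both $(\Tor H^{i+1}_\et(X,\Z(n)))^*$ and $TH^{j+1}_\et(X,\Z(u))$ can be infinite. Your proposed substitute --- that local Tate duality forces the image of the divisible part into a finite subgroup of the Tate module --- is precisely the assertion that needs proof, and you give no argument for it. (2) The specialization step in the good-reduction case requires a compatibility of the Bockstein map $\delta$ with reduction to the special fibre that you do not establish; moreover, even granting it, Theorem \ref{delzero} over the finite residue field is unconditional only in degrees away from $2n$ and $2n+1$ of the special fibre, and you do not verify in which of those degrees the weight-zero contributions of the boundary cases $i=2n\pm1$ actually land --- note that the paper's finite-field result in degree $2n+1$ is itself conditional on finiteness of $H^{2u+1}_\et$, so landing there would not give an unconditional vanishing. (3) The $p$-primary part via Sato's twists requires semistable or good reduction, whereas the first alternative of the conjecture ($i\notin\{n+1,\dots,n+d\}$) imposes no reduction hypothesis. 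As written, your proposal does not close any of the cases left open by the paper's Proposition, and you acknowledge as much when you call the second step ``the main obstacle.''
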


We give examples for $\delta $ to be non-zero, thus 
giving counterexamples to duality of 
\'etale motivic cohomology over local fields. 
Since $H^1_\et(X,\Z)=0$, we get from the limit of 
\eqref{locfseq} a short exact sequence 
$$ 0\to H^{2d+1}_\et(X,\Z(d+1))^\wedge \to (\Tor H^2_\et(X,\Z))^* \to 
T H^{2d+2}_\et(X,\Z(d+1))\to 0.$$
For $X$ a curve, $H^{3}_\et(X,\Z(2))\cong H^{3}_\M(X,\Z(2))\cong 
SK_1(X)$, and it follows from 
S.\ Saito's result \cite[Thm. 2.6]{saitocurve}
that the right hand side has rank equal to 
$\rk H^1_\et(Y,\Z)$, where $Y$ the special fiber of a smooth and proper model. 
For arbitrary dimension, Yoshida proved that its rank 
is the dimension
of the maximal split torus of the Neron model of $\Alb_X$ \cite{yoshida}.
Hence $\delta $ can be non-zero for a curve 
(with bad reduction) in weights $n=2, u=0$. 

We now give an example, obtained with the help of S.\ Saito and K.\ Sato, 
showing that $\delta $
can be non-zero even for schemes with good reduction.
For $ n=d, i=2d$, the limit of \eqref{locfseq} gives a sequence
\begin{equation*}
0\to \Pic(X)^{\wedge l}\to (H^{2d+1}_\et(X,\Z(d))\{l\})^* 
\stackrel{\delta}{\to} T_l \Br(X)\to (H^{2d}_\et(X,\Z(d))\otimes\Q_l/\Z_l)^*\to 0.
\end{equation*}

\begin{proposition}
Assume that $X$ admits a smooth and proper model $\mathcal X$.
Then we have a commutative diagram with exact rows:
$$ \begin{CD}
0@>>> T_l \Br \mathcal X@>>> T_l\Br X@>>> (CH_0(X)\otimes \Q_l/\Z_l)^*\\
@.@VVV @VVV @|\\
@.0 @>>> (H^{2d}_\et(X,\Z(d))\otimes\Q_l/\Z_l)^* @>>> (CH_0(X)\otimes \Q_l/\Z_l)^*
\end{CD}$$
Here the upper row is induced by the Brauer-Manin pairing and the lower row
by the change of topology map.
\end{proposition}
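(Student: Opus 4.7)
The plan is to verify each piece of the diagram separately: the lower row, the upper row, and the commutativity of the left square. Each row reflects a different input — a cohomological descent statement, a class field theoretic identification, and compatibility of two cup products.

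\textbf{Lower row.} I would show that the change-of-topology map $CH_0(X)=H^{2d}_\Zar(X,\Z(d))\to H^{2d}_\et(X,\Z(d))$ becomes surjective after tensoring with $\Q_l/\Z_l$; the displayed arrow is then its Pontrjagin dual. For each $r$ the Bloch complex vanishes in Zariski degrees above $2d$, so the coefficient sequence gives $CH_0(X)/l^r\cong H^{2d}_\Zar(X,\Z/l^r(d))$, while Kato--Saito's unramified cycle map identifies $CH_0(X)/l^r$ with $H^{2d}_\et(X,\mu_{l^r}^{\otimes d})$ for $l\neq p$. Comparing with the étale coefficient sequence shows $CH_0(X)/l^r\twoheadrightarrow H^{2d}_\et(X,\Z(d))/l^r$; passing to the colimit over $r$ and dualizing yields the injection in the lower row.

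\textbf{Upper row.} Gabber's absolute purity applied to the smooth, hence regular, model $\mathcal X$ gives $\Br\mathcal X\hookrightarrow \Br X$, whence $T_l\Br\mathcal X\hookrightarrow T_l\Br X$. The composition to $(CH_0(X)\otimes\Q_l/\Z_l)^*$ vanishes by the usual good-reduction argument: every closed point $x\in X$ extends by the valuative criterion to a section $\Spec\mathcal O_{k(x)}\to\mathcal X$, so a class in $\Br\mathcal X$ pulls back into $\Br(\mathcal O_{k(x)})$, which vanishes because the residue field is finite. Exactness at $T_l\Br X$ — the nontrivial inclusion — is the content of S.~Saito's unramified class field theory for smooth proper schemes over local fields: the kernel of the Brauer--Manin pairing is identified with the image of $\Br\mathcal X$, and applying $T_l$ preserves this identification.

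\textbf{Commutativity of the left square.} The two maps $T_l\Br X\to (CH_0(X)\otimes\Q_l/\Z_l)^*$ under consideration both arise from a cup-product pairing landing in $\Br(k)=\Q/\Z$. The top horizontal is by construction induced by the classical Brauer--Manin pairing $\Br X\times CH_0(X)\to\Br(k)$. The lower composite is induced from the étale motivic cup product $H^3_\et(X,\Z(1))\times H^{2d}_\et(X,\Z(d))\to H^{2d+3}_\et(X,\Z(d+1))\xrightarrow{tr}H^3_\et(k,\Z(1))=\Br(k)$ of \eqref{basicpair}, precomposed with the change-of-topology map $CH_0(X)\to H^{2d}_\et(X,\Z(d))$ in the second variable. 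Since the étale cup product is constructed to restrict to the cup product on Bloch's higher Chow groups under $\epsilon^*$, and the classical Brauer--Manin pairing is a shadow of the latter, both prescriptions produce the same element of $(CH_0(X)\otimes\Q_l/\Z_l)^*$.

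\textbf{Main obstacle.} The deepest input is the upper row: the exactness at $T_l\Br X$ is tantamount to S.~Saito's class field theory for smooth proper varieties over local fields, and everything else is (relatively) formal. The surjectivity feeding the lower row is also not elementary — it rests on the Kato--Saito description of $CH_0/l^r$ via étale cohomology — but is comparatively standard.
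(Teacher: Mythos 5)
Your overall architecture matches the paper's: the upper row rests on Colliot-Th\'el\`ene--Saito's identification of the kernel of the Brauer--Manin pairing with $T_l\Br\mathcal X$ (your "S.~Saito's unramified class field theory" is the same input, and your purity/valuative-criterion remarks for the rest of the upper row are fine), the lower row amounts to surjectivity of $CH_0(X)\otimes\Q_l/\Z_l\to H^{2d}_\et(X,\Z(d))\otimes\Q_l/\Z_l$, and the residual issue is commutativity of the square. But two of your three steps have problems. For the lower row you invoke an identification $CH_0(X)/l^r\cong H^{2d}_\et(X,\mu_{l^r}^{\otimes d})$ attributed to Kato--Saito. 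That is not available here: their unramified class field theory concerns proper schemes over finite fields or arithmetic bases, whereas over a $p$-adic field the mod-$n$ cycle class map is in general not bijective (the degeneration phenomena studied in this very section live in the gap between $CH_0(X)/n$ and \'etale cohomology). Moreover the statement you aim for --- surjectivity of $CH_0(X)/l^r\to H^{2d}_\et(X,\Z(d))/l^r$ for every $r$ --- is stronger than needed. The paper's argument is a one-liner: the cokernel of $CH_0(X)\to H^{2d}_\et(X,\Z(d))$ is torsion (the two theories agree rationally), a torsion group tensored with $\Q_l/\Z_l$ vanishes, and $-\otimes\Q_l/\Z_l$ is right exact; hence the map is surjective after tensoring and its Pontrjagin dual is injective.

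The commutativity of the left square is where the actual work lies, and you assert it rather than prove it. The Brauer--Manin pairing is defined point by point, sending $(\sum n_ix_i,\alpha)$ to $\sum n_i\operatorname{cor}_{k(x_i)/k}(x_i^*\alpha)$, while the lower pairing is cup product with the image of the zero-cycle in $H^{2d}_\et(X,\Z/m(d))$. Saying that the latter "restricts to" the cup product on higher Chow groups does not compare it with the pointwise recipe. The paper isolates this as a separate Lemma: after reducing mod $m$ (using surjectivity of $H^{2d}_\et(X,\Z(d))/m\to$ the image in $H^{2d}_\et(X,\Z/m(d))$, one checks for each closed point $i:\Spec k(x)\to X$ the projection formula $i_*(i^*\alpha\cup 1)=\alpha\cup i_*(1)$ together with the compatibility of the trace map $H^{2d+2}_\et(X,\Z/m(d+1))\to\Z/m$ with $i_*$ and corestriction. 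You should supply this verification; without it the square's commutativity, which is the only nontrivial assertion beyond the cited exactness statements, is unproved.
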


\begin{proof} 
By Colliot-Th\'el\`ene and Saito \cite[Cor. 2.4]{its}, 
the kernel of the Brauer-Manin 
pairing is $ T_l \Br \mathcal X$. The lower row is exact because the 
cokernel of $CH_0(X)\to H^{2d}_\et(X,\Z(d))$ is torsion, hence it vanishes
after tensoring with $\Q/\Z$. It remains to show 
that the diagram is commutative.
\end{proof}

\begin{lemma}
The following diagram is commutative, where the upper pairing is the
Brauer-Manin pairing and the lower pairing the cup-product pairing:
$$\begin{CD}
CH_0(X)/m  @. \times @. {}_m \Br(X) @>>> \Z/m \\
@VVV @. @|  @|\\
H^{2d}_\et(X,\Z(d))/m @. \times @. {}_m \Br(X) @>>> \Z/m .
\end{CD}$$
\end{lemma}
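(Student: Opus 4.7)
The strategy is to reduce the commutativity to a pointwise computation on generators of $CH_0(X)/m$ via the projection formula for \'etale motivic cohomology.

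First, since $CH_0(X)$ is generated by classes $[x]$ of closed points $x\in X$, it suffices to check the identity on such a generator paired with a class $\alpha\in{}_m\Br(X)\subseteq H^3_\et(X,\Z(1))$. Let $i_x\colon\Spec k(x)\hookrightarrow X$ denote the closed immersion, and write $f\colon X\to \Spec k$ and $g\colon \Spec k(x)\to \Spec k$ for the structure maps, so that $g=f\circ i_x$ is finite. The change of topology map sends $[x]\in CH_0(X)$ to the \'etale Gysin class $(i_x)_*(1)\in H^{2d}_\et(X,\Z(d))$, where $1\in H^0_\et(\Spec k(x),\Z(0))=\Z$; this is just the \'etale cycle class of the point.

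Next, compute the cup-product pairing value on $((i_x)_*(1),\alpha)$. Applying the projection formula for the closed immersion $i_x$ in \'etale motivic cohomology,
$$(i_x)_*(1)\cup \alpha \;=\; (i_x)_*\bigl(1\cup i_x^*\alpha\bigr) \;=\; (i_x)_*(i_x^*\alpha)$$
in $H^{2d+3}_\et(X,\Z(d+1))$. Applying the trace $f_*$ and using functoriality $f_*\circ(i_x)_*=g_*$, the pairing evaluates to $g_*(i_x^*\alpha)\in H^3_\et(k,\Z(1))\cong\Br(k)$. Under the identification $H^3_\et(-,\Z(1))=H^2_\et(-,\G_m)$, the Gysin map $g_*$ for the finite separable extension $k(x)/k$ coincides with the classical corestriction $\Cor_{k(x)/k}$, so the cup-product pairing returns $\Cor_{k(x)/k}(\alpha|_x)$, which is precisely the Brauer-Manin value of $([x],\alpha)$.

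The main obstacle is verifying two compatibilities: (a) the projection formula for $(i_x)_*$ against the cup product on \'etale motivic cohomology, and (b) the identification of the motivic trace $f_*\colon H^{2d+3}_\et(X,\Z(d+1))\to H^3_\et(k,\Z(1))$ with the Galois-theoretic corestriction on Brauer groups. Both reduce to the analogous statements for Bloch's higher Chow groups---the Zariski pushforward along the closed point sends $1$ to the cycle $[x]$ and commutes with $f_*$ and $g_*$ by construction---together with the Zariski-\'etale comparison recalled at the beginning of Section 2, through which the \'etale trace was defined. Once these identifications are in place, the diagram commutes on generators, hence on all of $CH_0(X)/m$.
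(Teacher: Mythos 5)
Your overall architecture --- reduce to classes of closed points, apply a projection formula for the pushforward along $i_x\colon\Spec k(x)\hookrightarrow X$, and identify the trace with corestriction on Brauer groups --- is the same skeleton as the paper's argument. But there is a genuine gap in how you justify the key step. You invoke the projection formula for the \emph{integral} \'etale motivic cup product $(i_x)_*(1)\cup\alpha$ with $\alpha\in{}_m\Br(X)\subseteq H^3_\et(X,\Z(1))$, and you claim this reduces to the analogous statement for Bloch's higher Chow groups together with the Zariski--\'etale comparison from Section 2. That reduction does not apply here: the comparison $\Z(n)^\Zar\to R\epsilon_*\Z(n)$ is a quasi-isomorphism only in weights $n\geq d$, whereas $\alpha$ lives in weight $1$, and indeed $\alpha$ is invisible on the Zariski side since $H^3_\Zar(X,\Z(1))\cong H^2_\Zar(X,\G_m)=0$ for $X$ smooth while $\Br(X)$ need not vanish. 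So there is no higher Chow incarnation of $\alpha$ against which to run the cycle-theoretic projection formula, and the compatibility of the Gysin map with the product on \'etale hypercohomology in these mixed weights is precisely what needs an argument.

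The paper sidesteps this by first passing to finite coefficients: using the compatibility \eqref{ababa} and the surjectivity of $H^2_\et(X,\Z/m(1))\to{}_m\Br(X)$, the lower pairing is computed by lifting $\alpha$ to a class with $\Z/m(1)$-coefficients, after which the pointwise computation uses only the classical projection formula for \'etale cohomology with finite coefficients (and the identification of the trace with corestriction there). If you insert that reduction as a first step, the rest of your argument goes through essentially as you wrote it; without it, the central identity $(i_x)_*(1)\cup\alpha=(i_x)_*(i_x^*\alpha)$ is unsupported. (A minor additional point: $k(x)/k$ need not be separable in positive characteristic, though this is harmless for the corestriction on Brauer groups.)
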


\begin{proof}
By definition of the lower pairing, 
it suffices to show this after adding the following commutative diagram 
on the bottom
$$\begin{CD}
H^{2d}_\et(X,\Z(d))/m @. \times @. {}_m \Br(X) @>>> \Z/m \\
@VVV @. @AAA   @|\\
H^{2d}_\et(X,\Z/m(d)) @. \times @. H^2_\et(X,\Z/m(1) @>>> \Z/m 
\end{CD}$$
because the middle vertical map is surjective. The Brauer-Manin
pairing is defined point by point. But if $i:\Spec k\to X$ is a closed
point, then the commutativity follows from the projection formula,
i.e. the commutativity of the following diagram
$$\begin{CD}
H^{0}_\et(k,\Z/m(0)) @. \times @. H^2_\et(k,\Z/m(1)) @>>> 
\Z/m\cong H^2_\et(k,\Z/m(1) \\
@Vi_* VV @. @Ai^* AA   @V i_* V\sim V\\
H^{2d}_\et(X,\Z/m(d)) @. \times @. H^2_\et(X,\Z/m(1)) @>>> \Z/m\cong 
 H^{2d+2}_\et(X,\Z/m(d+1)) .
\end{CD}$$\qed
\end{proof}

Finally, we note that there are examples with non-vanishing
$ T_l \Br \mathcal X$: Let $X/\Q_p$ be the self product
of an elliptic curve $E/\Q_p$ without complex multiplication and 
good reduction $E_s$.
Then the graph of the Frobenius of $E_s$ in $\Pic(Y)$, $Y$ the special fiber of
the proper smooth model $\mathcal X/\Z_p$, does not lift to $\Pic(\mathcal X)$, 
because $\End(E)$ has rank $1$ \cite[p. 331]{LS}. 
Hence we conclude by the proper base change theorem and the following diagram
$$\begin{CD}
0@>>> \Pic(\mathcal X)^{\wedge l} @>>>H^2(\mathcal X,\Z_l(1))  @>>> T_l \Br \mathcal X@>>> 0\\
@. @VVV @| @VVV \\
0@>>> \Pic(Y)^{\wedge l} @>>>H^2(Y,\Z_l(1))  @>>> T_l \Br Y@= 0
\end{CD}$$


\begin{remark}
We believe that there should be a better behaved duality theory for Weil-\'etale
cohomology groups, see \cite{karpuk} in the case of curves. B. Morin expects
that there are locally compact groups $H^i_W(X,\Z(n))$ and $H^j_W(X,\R/\Z(u))$,
together with a trace map 
$H^{2d+2}_W(X,\R/\Z(d+1))\to H^{2}_W(K,\R/\Z(1))\cong \R/\Z$,
such that there is a perfect Pontrjagin pairing of locally compact groups
$$ H^i_W(X,\Z(n))\times H^{2d+2-i}_W(X,\R/\Z(d+1-n))\to 
H^{2d+2}_W(X,\R/\Z(d+1))\to \R/\Z.$$
\end{remark}

\small

\myaddress{Department of Mathematics, Rikkyo University,\\
Nishi-ikebukuro, Toshimaku, Tokyo, Japan}
\myemail{\tt geisser@rikkyo.ac.jp}

\end{document}